\newcommand{\arhan}{Arhan\-gel'ski\u{\i}\xspace}
\newcommand{\groth}{Gro\-then\-dieck\xspace}
\newcommand{\lind}{Lindel\"of\xspace}
\newcommand{\slind}{surlindel\"of\xspace}
\newcommand{\frech}{Fr\'e\-chet-Ury\-sohn\xspace}
\newcommand{\MA}{\ensuremath{\mathrm{MA}}}
\newcommand{\PFA}{\ensuremath{\mathrm{PFA}}}
\newcommand{\ZFC}{\ensuremath{\mathrm{ZFC}}}
\newcommand{\CH}{\ensuremath{\mathrm{CH}}}
\renewcommand{\diamond}{\diamondsuit}
\newcommand{\refmap}[2]{\ensuremath{{#1}_{{}_{#2}}}} 
\newcommand\blfootnote[1]{%
  \begingroup
  \renewcommand\thefootnote{}\footnote{#1}%
  \addtocounter{footnote}{-1}%
  \endgroup
}
\def\@setthanks{\vspace{-\baselineskip}\def\thanks##1{\@par##1\@addpunct.}\thankses}
\begin{document}
\title[Countable Tightness \& Grothendieck]{Countable Tightness and the Grothendieck Property in $\mathbf{C_p}$-theory}
\author[Franklin D.~Tall]{Franklin D.~Tall$^{1}$}
\thanks{$^{1}$Research supported by NSERC Grant A-7354.}
\date{\today}
\maketitle

\blfootnote{ {\it 2010 Mathematics Subject Classification}.
    54C35, 54A35, 54G20, 54A20, 54A25.}
 \blfootnote{ {\it Key words and phrases}. \groth property, countable tightness, $C_p(X)$, \lind, \frech, PFA, \slind.}

\begin{abstract}
The \groth property has become important in research on the definability of pathological Banach spaces \cite{Casazza}, \cite{Hamelb}, and especially \cite{Hamel}. We here answer a question of \arhan by proving it undecidable whether countably tight spaces with \lind finite powers are \groth. We answer another of his questions by proving that PFA implies \lind countably tight spaces are \groth. We also prove that various other consequences of $\MA_{\omega_1}$ and $\PFA$ considered by \arhan, Okunev, and Reznichenko are not theorems of $\ZFC$.
\end{abstract}

\section{Introduction}
For a topological space $X$, $C_p(X)$ is the set of continuous real-valued functions on $X$, given the pointwise topology inherited from $\reals^X$. The classic theorem of \groth \cite{Grothendieck1952} states:

\begin{prop}
\label{prop:groth}
	Let $X$ be countably compact and let $A \subseteq C_p(X)$ be such that every infinite subset of $A$ has a limit point in $C_p(X)$. Then the closure of $A$ in $C_p(X)$ is compact.
\end{prop}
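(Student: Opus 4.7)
The plan is to establish, in order: pointwise boundedness of $A$, compactness of its closure in $\reals^X$, and that this closure lies inside $C_p(X)$.

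\textbf{Step 1 (pointwise boundedness).} First I would show that for each $x\in X$ the set $\{f(x):f\in A\}$ is bounded in $\reals$. Otherwise one can pick a sequence $\{f_n\}\subseteq A$ with $|f_n(x)|\to\infty$; by hypothesis the infinite set $\{f_n:n\in\omega\}$ has a limit point $g\in C_p(X)$, but the basic neighborhood of $g$ determined by the singleton $\{x\}$ and $\varepsilon=1$ forces infinitely many $f_n$ to satisfy $|f_n(x)-g(x)|<1$, contradicting unboundedness.

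\textbf{Step 2 (compactness in the product).} Pointwise boundedness places $A$ inside a product $\prod_{x\in X}[-M_x,M_x]$ of compact intervals, so by Tychonoff the closure $\overline{A}^{\,\reals^X}$ is compact. It therefore suffices to show that every $g\in\overline{A}^{\,\reals^X}$ is continuous, for then $\overline{A}^{\,\reals^X}=\overline{A}^{\,C_p(X)}$ and the latter is compact.

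\textbf{Step 3 (the main step: pointwise limits stay continuous).} Suppose, for contradiction, that some $g\in\overline{A}^{\,\reals^X}$ were discontinuous at a point $x_0\in X$, say with oscillation at least $3\varepsilon>0$ on every neighborhood of $x_0$. I would recursively build $\{f_n\}\subseteq A$ and $\{x_n\}\subseteq X$ so that (i) each $f_n$ approximates $g$ within $\varepsilon$ on $\{x_0,x_1,\dots,x_n\}$ (possible because $g\in\overline{A}^{\,\reals^X}$), and (ii) on the common neighborhood $U_n$ of $x_0$ where $f_0,\dots,f_{n-1}$ all oscillate by less than $\varepsilon$ (possible because these finitely many $f_i$ are continuous), one can still pick $x_n\in U_n$ with $|g(x_n)-g(x_0)|>3\varepsilon$. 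This yields $|f_n(x_n)-f_n(x_0)|>\varepsilon$ while $|f_i(x_n)-f_i(x_0)|<\varepsilon$ for $i<n$. The hypothesis on $A$ now supplies a limit point $h\in C_p(X)$ of the infinite set $\{f_n\}$, while countable compactness of $X$ supplies an accumulation point $y\in X$ of $\{x_n:n\in\omega\}$. Playing the built-in oscillation of $(f_n)$ against the continuity of $h$ at $y$ produces the classical double-limit contradiction: evaluating $h$ at $x_0$ and at $y$ along appropriate cofinal selections of indices yields two values forced to differ by more than $\varepsilon$ yet simultaneously forced to agree, contradicting $h\in C_p(X)$.

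The main obstacle is Step~3. Step~1 is a one-line observation and Step~2 is Tychonoff, but the recursive construction in Step~3 has to be balanced delicately: the continuity of earlier $f_i$ is what lets us localize $x_n$ near $x_0$ in a way visible to them, while the discontinuity of $g$ is what lets us still witness oscillation with the new $f_n$. Extracting the final contradiction — the double-limit inversion that forces the putative continuous limit $h$ to be discontinuous at the accumulation point $y$ — is the technical heart of Grothendieck's theorem and the place where countable compactness of $X$ is truly used.
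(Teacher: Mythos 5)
The paper offers no proof of Proposition \ref{prop:groth}: it is quoted as Grothendieck's classical theorem, with the argument left to \cite{Grothendieck1952}, so there is nothing in-paper to compare against. Judged on its own, your outline is the standard proof and is essentially correct: Steps 1 and 2 are fine (in Step 1 you implicitly use that $C_p(X)$ is $T_1$, so every neighbourhood of a limit point of an infinite set contains infinitely many of its elements). The one place you stop short is the closing contradiction in Step 3, which you describe but do not perform, so let me record how it closes (write $\delta$ for a small fraction of $\varepsilon$ in place of your fixed constants). For $i<n$ you have $|f_i(x_n)-f_i(x_0)|<\delta$, so by continuity of each $f_i$ the cluster point $y$ of $(x_n)$ satisfies $|f_i(y)-f_i(x_0)|\le\delta$ for every $i$; this is a closed condition on elements of $\reals^X$, hence passes to the limit point $h$ of $\{f_i\}$, giving $|h(y)-h(x_0)|\le\delta$. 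Conversely, continuity of $h$ at $y$ yields some $x_m$ with $|h(x_m)-h(y)|<\delta$; since $h$ is also a limit point of the tail $\{f_n:n>m\}$, every member of which satisfies the closed condition $|f_n(x_m)-f_n(x_0)|\ge\varepsilon-2\delta$, we get $|h(x_m)-h(x_0)|\ge\varepsilon-2\delta$ and hence $|h(y)-h(x_0)|\ge\varepsilon-3\delta$, contradicting the first estimate once $\delta<\varepsilon/4$. Two minor repairs are needed: discontinuity at $x_0$ should be phrased as ``there is $\varepsilon>0$ such that every neighbourhood of $x_0$ contains a point $x$ with $|g(x)-g(x_0)|>\varepsilon$'' (oscillation at least $3\varepsilon$ on every neighbourhood does not literally supply points whose $g$-value is $3\varepsilon$ away from $g(x_0)$); and before invoking the hypothesis on $A$ one must check that $\{f_n:n\in\omega\}$ is infinite --- fortunately your two displayed inequalities force the $f_n$ to be pairwise distinct, since $f_i=f_n$ with $i<n$ would make them contradict each other at the point $x_n$.
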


This theorem has many applications in Analysis. We became interested in it due to its applications in Model Theory (see \cite{Casazza}, \cite{Hamelb}, and \cite{Hamel}). These involve questions of definability, especially of pathological Banach spaces. The upshot is that if certain topological spaces (\emph{type spaces}) associated with a \emph{logic} have the closure property $X$ has in the above theorem, then these Banach spaces are not definable in that logic. We are therefore interested in what classes of topological spaces other than the countably compact ones satisfy the conclusion of Proposition \ref{prop:groth}. We restrict ourselves to only consider infinite completely regular spaces. 

\begin{defir}[\cite{Arhangelskii1998}]
	$A \subseteq X$ is \emph{countably compact in $X$} if every infinite subset of $A$ has a limit point in $X$.
	$X$ is a \emph{$g$-space} if each $A \subseteq X$ which is countably compact in $X$ has compact closure.
	$X$ is a \emph{\groth space} (resp.~\emph{weakly \groth space}) if $C_p(X)$ is a hereditary $g$-space (resp.~a $g$-space).
\end{defir}

\begin{defi}
	$X$ is \emph{countably tight} if whenever $A \subseteq X$ and $x \in \clsr{A}$, there is a countable $B \subseteq A$ such that $x \in \clsr{B}$.
	$X$ is \emph{realcompact} if $X$ can be embedded as a closed subspace of a product of copies of the real line.
\end{defi}

\begin{thmr}[\cite{Arhangelskii1998}]
\label{thm:clear}
	If $X$ is countably tight, then $X$ is weakly \groth.
\end{thmr}

This is stated as ``clear'' in \cite{Arhangelskii1998}. Here is a proof:

Clearly, 

\begin{lem}
	A closed subspace of a realcompact space is realcompact.
\end{lem}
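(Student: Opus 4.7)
The plan is essentially to unwind the definition: since realcompactness here is defined as being embeddable as a closed subspace of some power of $\reals$, the lemma reduces to the transitivity of ``closed subspace.''

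Concretely, suppose $Y$ is closed in $X$ and $X$ is realcompact. By the definition given in the excerpt, there is an embedding $e \colon X \to \reals^\kappa$ whose image $e(X)$ is closed in $\reals^\kappa$. I would then show that the composition $e\!\restriction\!Y \colon Y \to \reals^\kappa$ is an embedding onto a closed subset of $\reals^\kappa$. The restriction is automatically a homeomorphism onto its image, so the only thing to check is that $e(Y)$ is closed in $\reals^\kappa$. Since $Y$ is closed in $X$, $e(Y)$ is closed in $e(X)$ in the subspace topology, i.e.\ $e(Y) = e(X) \cap F$ for some closed $F \subseteq \reals^\kappa$; intersecting two closed subsets of $\reals^\kappa$ (namely $e(X)$ and $F$) gives a closed subset of $\reals^\kappa$.

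There isn't really a main obstacle — the content of the lemma is exactly the observation that a closed subset of a closed subset of $Z$ is a closed subset of $Z$, applied to $Z = \reals^\kappa$. The only mildly non-trivial bookkeeping is making sure one distinguishes the relative topology of $Y$ in $X$ from its topology as a subspace of $\reals^\kappa$, but these coincide because $e$ is an embedding. Hence $Y$ embeds as a closed subspace of $\reals^\kappa$ and is therefore realcompact.
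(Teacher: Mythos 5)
Your proof is correct and is exactly the argument the paper has in mind: the paper simply labels this lemma ``Clearly'' and gives no proof, since with realcompactness defined as embeddability as a closed subspace of a power of $\reals$, the statement reduces to the transitivity of ``closed in,'' which is what you verify. No issues.
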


\begin{lemr}[\cite{Engelking1989}]
	A completely regular space is compact if and only if it is realcompact and countably compact.
\end{lemr}

\begin{defi}
	A space is \emph{wD} if whenever $\seq{d}{n}{n<\omega}$ is a closed discrete subspace, there is an infinite $S \subseteq \omega$ and a discrete collection of open sets $\st{U_n}{n \in S}$ with $d_n \in U_n$ for all $n \in S$.
\end{defi}

\begin{lemr}[\cite{vanDouwen1984}, \cite{Vaughan1978}]
Every realcompact space is wD.
\end{lemr}

\begin{lem}[folklore]
	Let $X$ be wD. Let $Y$ be countably compact in $X$. Then $\clsr{Y}$ is countably compact.
\end{lem}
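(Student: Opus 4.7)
The plan is a straightforward proof by contradiction using the \emph{wD} hypothesis to manufacture a closed discrete subset of $Y$ itself from an alleged counterexample in $\overline{Y}$.

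First, suppose $\overline{Y}$ is not countably compact. Then there is a countably infinite set $D=\{d_n:n<\omega\}\subseteq\overline{Y}$ with no limit point in $\overline{Y}$. I would begin by observing that $D$ has no limit point in $X$ either: any such limit point would lie in $\overline{D}\subseteq\overline{\,\overline{Y}\,}=\overline{Y}$, contradicting the choice of $D$. Hence $D$ is a closed discrete subspace of $X$.

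Next, I apply the wD property to $D$ to obtain an infinite $S\subseteq\omega$ and a discrete family $\{U_n:n\in S\}$ of open sets with $d_n\in U_n$ for every $n\in S$. Since each $d_n$ lies in $\overline{Y}$ and $U_n$ is an open neighborhood of $d_n$, I can pick a point $y_n\in Y\cap U_n$ for every $n\in S$. Discreteness of $\{U_n:n\in S\}$ immediately forces $\{y_n:n\in S\}$ to be a closed discrete (hence infinite) subspace of $X$, so it has no limit point in $X$. This contradicts the assumption that $Y$ is countably compact in $X$, completing the argument.

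There is no real obstacle here; the only thing to watch is the first observation, that a would-be limit point of $D$ in $X$ must already lie in the closed set $\overline{Y}$, which is what lets wD be applied in $X$ rather than in the possibly non-wD subspace $\overline{Y}$. Everything else is a direct use of definitions: discreteness of the family $\{U_n\}$ transfers to the chosen selectors $y_n$, and the countable compactness of $Y$ in $X$ is exactly the statement contradicted.
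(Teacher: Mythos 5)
Your proof is correct and follows essentially the same route as the paper's: take a closed discrete witness in $\clsr{Y}$, note it is closed discrete in $X$, apply wD, and select points of $Y$ from the discrete family to contradict countable compactness of $Y$ in $X$. In fact you make explicit a step the paper leaves implicit, namely that the witness set is closed discrete in $X$ (not just in $\clsr{Y}$) so that wD applies.
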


\begin{proof}
	Suppose not. Let $\seq{d}{n}{n<\omega}$ be a closed discrete subspace of $\clsr{Y}$. Let $\st{U_n}{n \in S}$ be a discrete collection of open subsets of $X$, with $d_n \in U_n$ for every $n \in S$, where $S \subseteq \omega$ is infinite. Pick $e_n \in U_n \insect Y$. Then $\st{e_n}{n \in S}$ is a closed discrete subspace of $Y$, contradiction.
\end{proof}

\begin{lemr}[\cite{ArhangelskiiFunction}]
	If $X$ is countably tight, then $C_p(X)$ is realcompact.
\end{lemr}

\begin{proof}[Proof of Theorem \ref{thm:clear}]
	Let $X$ be countably tight. Then $C_p(X)$ is realcompact and hence wD. Let $Y$ be countably compact in $C_p(X)$. Then $\clsr{Y}$ is countably compact. But $\clsr{Y}$ is realcompact, so $\clsr{Y}$ is compact.
\end{proof}

\section{Applications of the Proper Forcing Axiom}
In \cite{Arhangelskii1998}, \arhan proved:

\begin{prop}
\label{thm:MAnoCH}
	$\MA$ + $\neg\CH$ implies that if $X$ is countably tight and $X^n$ is \lind for all $n < \omega$, then $X$ is \groth.
\end{prop}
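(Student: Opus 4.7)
My plan is to upgrade the weakly \groth argument of Theorem \ref{thm:clear} to a hereditary statement, using $\MA + \neg\CH$ to argue that suitable countable closures inside $C_p(X)$ are accessed by convergent sequences.

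Let $Y \subseteq C_p(X)$ and $A \subseteq Y$ be countably compact in $Y$; the goal is to show the closure of $A$ in $Y$ is compact. Since $A$ is \emph{a fortiori} countably compact in $C_p(X)$, and $X$ is countably tight, Theorem \ref{thm:clear} gives that $K := \clsr{A}$, taken in $C_p(X)$, is compact. By the theorem of \arhan and Pytkeev that $t(C_p(X)) = \sup_n L(X^n)$, our hypothesis that $X^n$ is \lind for every $n < \omega$ forces $t(C_p(X)) = \omega$, so $K$ is compact, countably tight, and has $A$ dense in it.

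It then remains to show $K \subseteq Y$, from which the closure of $A$ in $Y$ equals $K$ and hence is compact. Suppose, toward a contradiction, $z \in K \setminus Y$. By countable tightness of $K$, pick a countable $B \subseteq A$ with $z \in L := \clsr{B}$; then $L$ is a compact, countably tight, separable Hausdorff space, and a theorem of \v{S}apirovski\u{\i} bounds $|L| \leq 2^{\aleph_0}$. The main obstacle is to use $\MA + \neg\CH$ to conclude that such an $L$ is \frech. I expect this to go through a standard Martin's Axiom argument ruling out $\omega_1$-long free sequences, or equivalently copies of the sequential fan $S_{\omega_1}$, inside the separable compact countably tight $L$; in $\ZFC$ alone this upgrade can fail, as Fedorchuk-style $\CH$ constructions indicate, which is precisely why the axiom is needed.

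Granting the \frech property of $L$, pick a sequence $(b_n)_{n<\omega} \subseteq B$ with $b_n \to z$. If it is eventually constant, then $z \in B \subseteq A \subseteq Y$, contradicting $z \notin Y$. Otherwise $\{b_n : n < \omega\}$ is an infinite subset of $A$, which by the countable compactness of $A$ in $Y$ has a limit point $y \in Y$. But in the Hausdorff space $C_p(X)$, the only cluster point of a convergent sequence is its limit, so $y = z \in Y$, again a contradiction. Hence $K \subseteq Y$, as required, and $X$ is \groth.
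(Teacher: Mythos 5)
The paper itself does not prove Proposition \ref{thm:MAnoCH}; it is quoted from \cite{Arhangelskii1998}, so I am judging your argument on its own merits. Your skeleton is reasonable and close in spirit to \arhan's own reduction (Lemma \ref{lem:frech}): use Theorem \ref{thm:clear} for weak \groth-ness, use the \arhan--Pytkeev theorem (Lemma \ref{lem:xnctight}) to make $C_p(X)$ countably tight, and reduce everything to the \frech property of separable compact subspaces of $C_p(X)$. The endgame is also fine: a nontrivial sequence converging to $z$ in a $T_1$ space can have no set-limit point other than $z$, so countable compactness of $A$ in $Y$ forces $z\in Y$.

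The gap is the step where you claim that $\MA+\neg\CH$ makes the separable compact countably tight space $L$ \frech. That claim is false outright: the one-point compactification of a Mr\'owka $\Psi$-space over a maximal almost disjoint family on $\omega$ is, in \ZFC (hence also under $\MA+\neg\CH$), compact, separable, and sequential (so countably tight), but not \frech at the point at infinity, since maximality of the family prevents any sequence from the dense copy of $\omega$ from converging to it. Your proposed mechanism is also off target: a compact space is countably tight if and only if it contains no uncountable free sequence (\arhan, in \ZFC), so ``ruling out $\omega_1$-long free sequences'' in $L$ costs nothing and buys nothing, and in any case it would give at best sequentiality, not the \frech property you need at $z$ with respect to $B$. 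What rescues the argument is that $L$ is not an arbitrary separable compact countably tight space: it sits inside $C_p(X)$ with $X$ (indeed every finite power of $X$) \lind, so $L$ is separable, \slind, compact, and countably tight. Under $\MA_{\omega_1}$ such an $L$ is metrizable --- this is exactly Lemma \ref{lem:OR17}, equivalently Okunev's Proposition \ref{prop:okunevproves} --- and metrizable implies \frech. In other words, the \lind hypothesis must be invoked a second time at precisely the point where you tried to get by with countable tightness plus $\MA$ alone; with Lemma \ref{lem:OR17} substituted for your unsupported claim, the rest of your proof goes through.
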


In fact, $\MA_{\omega_1}$ suffices.

A dramatic strengthening of Proposition \ref{thm:MAnoCH} is

\begin{thm}
\label{thm:dramatic}
	$\PFA$ implies \lind countably tight spaces are \groth.
\end{thm}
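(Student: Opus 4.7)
My plan is to reduce Theorem \ref{thm:dramatic} to Proposition \ref{thm:MAnoCH}. Since $\PFA$ implies $\MA_{\omega_1}$, Proposition \ref{thm:MAnoCH} (in the $\MA_{\omega_1}$ form noted above) concludes ``$X$ is \groth'' as soon as $X$ is countably tight and $X^n$ is \lind\ for every $n<\omega$. The first hypothesis is in hand, so everything reduces to showing that, under $\PFA$, a \lind\ countably tight regular space automatically has all of its finite powers \lind.

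For that step I would invoke a known consequence of $\PFA$---roughly, that the product of two \lind\ countably tight regular spaces is again \lind---and iterate it. Together with \arhan's characterization of countable tightness of $C_p(X)$ in terms of the \lind\ finite powers of $X$ (which returns countable tightness to each finite power as it is built up), this drives a straightforward induction: given $X^n$ \lind\ and countably tight, the product theorem applied to $X^n$ and $X$ yields $X^{n+1}$ \lind, and the equivalence then supplies the countable tightness required to continue the induction.

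The main obstacle I expect is the $\PFA$ product theorem itself. It cannot hold in $\ZFC$, as witnessed by Michael-line style constructions of \lind\ countably tight regular spaces whose product with the irrationals fails to be \lind. The $\PFA$ version therefore demands characteristic proper-forcing-axiom machinery (for example the $P$-ideal dichotomy, the Open Graph Axiom, or Todorcevic-style reflection arguments), well beyond what $\MA_{\omega_1}$ alone provides. Once that ingredient is quoted, fitting the hypotheses of Proposition \ref{thm:MAnoCH} and concluding that $X$ is \groth\ is routine.
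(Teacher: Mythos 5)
Your reduction collapses at its key step: there is no $\PFA$ ``product theorem'' of the kind you want to quote, because the statement ``the product of two \lind countably tight regular spaces is \lind'' is refuted outright in $\ZFC$. The Sorgenfrey line is regular, first countable (hence countably tight), and hereditarily \lind, yet its square contains the uncountable closed discrete anti-diagonal $\st{(x,-x)}{x \in \reals}$ and so is not \lind; this is absolute, so no forcing axiom can repair it. (The Sorgenfrey line is not a counterexample to Theorem \ref{thm:dramatic} itself --- it is hereditarily separable and hence \groth by \cite[5.26]{Arhangelskii1998} --- but it shows that ``\lind and countably tight'' can never be upgraded to ``all finite powers \lind,'' which is exactly what your reduction to Proposition \ref{thm:MAnoCH} requires.) Your own remark about Michael-line-style obstructions points at the difficulty, but you then assume $\PFA$ removes it, and it does not.

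The paper's proof goes a different way entirely, staying inside $C_p$-theory rather than trying to improve the powers of $X$. Since $X$ is countably tight it is already weakly \groth (Theorem \ref{thm:clear}), so by \arhan's criterion (Lemma \ref{lem:frech}) it suffices to show that compact subspaces of $C_p(X)$ are \frech. Such subspaces are \slind compacta; under $\PFA$ they are countably tight (Lemma \ref{lem:PFAarhan}), and then the closure of any countable subset is a separable \slind compact countably tight space, hence metrizable by Okunev--Reznichenko (Lemma \ref{lem:OR17}), which yields the \frech property. If you want to salvage your argument, aim it at the compact subspaces of $C_p(X)$, not at the finite powers of $X$.
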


\begin{proof}
This actually follows easily from known results. First, a definition:

\begin{defi}
	A space is \emph{\slind} if it is a subspace of $C_p(X)$ for some \lind $X$.
\end{defi}

\arhan \cite{ArhangelskiiFunction} proved: 

\begin{lem}\label{lem:PFAarhan}
	$\PFA$ implies that every \slind compact space is countably tight.
\end{lem}

Okunev and Reznichenko \cite{Okunev2007} proved:

\begin{lem}\label{lem:OR17}
	$\MA_{\omega_1}$ implies that every separable \slind compact countably tight space is metrizable.
\end{lem}

\begin{defi}
A space $X$ is \emph{Fr\'echet-Urysohn} if whenever $x$ is a limit point of $Z \subseteq X$, there is a sequence in $Z$ converging to $x$. 
\end{defi}

It follows quickly that:

\begin{thm}
	$\PFA$ implies that every \slind compact space is \frech.
\end{thm}

\begin{proof}
	Metrizable spaces are clearly \frech. By countable tightness, if $K$ is compact and $L \subseteq K$ and $p \in \clsr{L}$, then there is a countable $M \subseteq L$ such that $p \in \clsr{M}$. But $\clsr{M}$ is separable compact and so metrizable.
\end{proof}

\arhan proved:

\begin{lemr}[\cite{Arhangelskii1998}]
\label{lem:frech}
	$X$ is \groth if and only if it is weakly \groth and compact subspaces of $C_p(X)$ are Fr\'echet-Urysohn.
\end{lemr}

This proves Theorem \ref{thm:dramatic}.
\end{proof}

Okunev and Reznichenko \cite{Okunev2007} point out that the conclusions of Lemmas \ref{lem:PFAarhan} and \ref{lem:OR17} can be simultaneously consistently achieved without large cardinals, so we have:

\begin{thm}
	If $\ZFC$ is consistent, so is $\ZFC$ plus ``every \lind countably tight space is \groth''.
\end{thm}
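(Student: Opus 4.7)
The plan is to follow the proof of Theorem \ref{thm:dramatic} essentially verbatim, substituting the Okunev--Reznichenko model for $\PFA$. The paragraph immediately preceding the theorem flags exactly the point that needs to be invoked: the two consequences of $\PFA$ used in Theorem \ref{thm:dramatic}, namely the conclusion of Lemma \ref{lem:PFAarhan} (every \slind compact space is countably tight) and the conclusion of Lemma \ref{lem:OR17} (every separable \slind compact countably tight space is metrizable), can be achieved simultaneously in a model of $\ZFC$ without any large cardinal assumption. So I would begin by citing \cite{Okunev2007} to obtain such a model, and then run the argument of Theorem \ref{thm:dramatic} inside it.

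Inside that model, suppose $X$ is \lind and countably tight. By definition $C_p(X)$ is \slind, so every compact $K \subseteq C_p(X)$ is a \slind compact space, hence countably tight by the first consequence. Now to show $K$ is \frech, take $p \in \clsr{L}$ with $L \subseteq K$; by countable tightness pick a countable $M \subseteq L$ with $p \in \clsr{M}$. Then $\clsr{M}$ is a separable \slind compact countably tight space, so metrizable by the second consequence, hence \frech, yielding a sequence in $M \subseteq L$ converging to $p$. Thus every compact subspace of $C_p(X)$ is \frech.

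To finish: by Theorem \ref{thm:clear}, countable tightness of $X$ already gives that $X$ is weakly \groth. Combined with the previous paragraph, Lemma \ref{lem:frech} yields that $X$ is \groth.

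There is essentially no obstacle—the content of this theorem is purely that the $\PFA$ proof of Theorem \ref{thm:dramatic} factors through two statements that are individually and jointly consistent with $\ZFC$, and the joint consistency (without large cardinals) is exactly what is attributed to Okunev and Reznichenko. The only place one might slip is in verifying that the argument of Theorem \ref{thm:dramatic} genuinely uses nothing about $\PFA$ beyond those two specific consequences, but inspection of that proof confirms this.
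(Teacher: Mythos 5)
Your proposal is correct and matches the paper's argument exactly: the paper likewise obtains the theorem by noting (following Okunev and Reznichenko) that the conclusions of Lemmas \ref{lem:PFAarhan} and \ref{lem:OR17} hold jointly in a model of $\ZFC$ without large cardinals, and that the proof of Theorem \ref{thm:dramatic} uses nothing about $\PFA$ beyond those two consequences. You have simply written out the verification more explicitly than the paper does.
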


Lemmas \ref{lem:PFAarhan} and \ref{lem:OR17} actually consistently solve several other problems of \arhan:

\begin{probr}[\cite{Arhangelskii1998}]
\label{arhanprob:sepcompact}
	If $X$ is separable and compact and $Y \subseteq C_p(X)$ is \lind, does $Y$ have a countable network?
\end{probr}

\begin{probr}[\cite{ArhangelskiiFunction}]
\label{arhanprob:sepcompactlind}
	If $X$ is separable and compact and $C_p(X)$ is \lind, must $X$ be hereditarily separable?
\end{probr}

Notice that a positive answer to the first of these yields a positive answer to the second, since a space with a countable network is clearly hereditarily separable.

\begin{lemr}[{\cite[I.1.3]{ArhangelskiiFunction}}]
	$X$ has a countable network if and only if $C_p(X)$ does.
\end{lemr}

Okunev \cite{Okunev1995} considers versions of Problem \ref{arhanprob:sepcompact} with the additional hypothesis that finite powers of $Y$ are \lind. He proves:

\begin{prop}
\label{prop:okunevproves}
	$\MA + \neg\CH$ implies that if $Y$ is a space with all finite powers \lind and $X$ is a separable compact subspace of $C_p(Y)$, then $X$ is metrizable.
\end{prop}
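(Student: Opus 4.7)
The plan is to deduce this from Lemma \ref{lem:OR17}, which requires $\MA_{\omega_1}$ (a consequence of $\MA+\neg\CH$), by checking the hypotheses on $X$. By assumption $X$ is separable and compact, so what remains is to verify that $X$ is \slind and countably tight.

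First, since $Y^1=Y$ is \lind, $X$ is by hypothesis a subspace of $C_p(Y)$ for a \lind space, so $X$ is \slind directly from the definition.

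Second, for countable tightness, I would invoke \arhan's classical theorem that $C_p(Y)$ is countably tight if and only if $Y^n$ is \lind for every $n < \omega$. Under our hypothesis the ``if'' direction applies, so $C_p(Y)$ is countably tight, and hence so is its subspace $X$.

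With $X$ now known to be separable, compact, \slind, and countably tight, Lemma \ref{lem:OR17} delivers metrizability of $X$. There is no real obstacle; the statement is essentially a packaging of Lemma \ref{lem:OR17} together with \arhan's tightness theorem, the latter being exactly what lets the finite-power Lindel\"of hypothesis on $Y$ feed into countable tightness of a subspace of $C_p(Y)$.
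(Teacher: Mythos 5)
Your derivation is correct. The paper itself offers no proof of this proposition --- it is cited as a result of Okunev from \cite{Okunev1995} --- so there is nothing internal to compare against, but each step you take checks out: $\MA+\neg\CH$ gives $\MA_{\omega_1}$; $Y=Y^1$ is \lind, so $X\subseteq C_p(Y)$ is \slind by definition; the \arhan characterization (Lemma \ref{lem:xnctight}, which the paper states only later, in Section 3) converts ``all finite powers of $Y$ are \lind'' into countable tightness of $C_p(Y)$, and countable tightness passes to the subspace $X$; Lemma \ref{lem:OR17} then yields metrizability. Your argument also explains the paper's remark that $\MA_{\omega_1}$ suffices, since that is all Lemma \ref{lem:OR17} requires. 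The one caveat worth flagging is logical provenance rather than validity: Lemma \ref{lem:OR17} is a 2007 result that postdates and essentially subsumes Okunev's 1995 proposition, so your route is a back-derivation from a stronger later theorem rather than a reconstruction of Okunev's original argument; within the deductive framework of this paper, where Lemma \ref{lem:OR17} is taken as given, that is perfectly legitimate.
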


He states that this is a reformulation of

\begin{prop}
\label{prop:okunevasserts}
	$\MA + \neg\CH$ implies that if $X$ is a separable compact space and $Y \subseteq C_p(X)$ has all finite powers \lind, then $Y$ has a countable network.
\end{prop}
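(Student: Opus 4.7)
The plan is to prove Proposition~\ref{prop:okunevasserts} directly from Proposition~\ref{prop:okunevproves} by a standard $C_p$-duality argument. Suppose $X$ is separable compact and $Y \subseteq C_p(X)$ has all finite powers \lind. Consider the evaluation map $\phi : X \to C_p(Y)$ given by $\phi(x)(f) = f(x)$; this is continuous because $Y$ carries the pointwise topology. Its image $\phi(X)$ is a continuous image of the separable compact space $X$, hence a separable compact subspace of $C_p(Y)$.

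Now apply Proposition~\ref{prop:okunevproves} with $Y$ playing the role of ``$Y$'' and $\phi(X) \subseteq C_p(Y)$ playing the role of ``$X$''. The hypothesis that all finite powers of $Y$ are \lind is exactly what is needed, and the conclusion is that $\phi(X)$ is metrizable. Being also separable, $\phi(X)$ has a countable network.

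Next I would transfer this back to $Y$. Since $\phi : X \twoheadrightarrow \phi(X)$ is a continuous surjection from a compact space to a Hausdorff space, it is a quotient map. Each $f \in Y$ is constant on the fibres of $\phi$ by definition of $\phi$, so $f$ factors uniquely as $f = \overline{f} \circ \phi$ with $\overline{f} \in C_p(\phi(X))$. A short check using nets shows that $f \mapsto \overline{f}$ is a topological embedding of $Y$ into $C_p(\phi(X))$: injectivity comes from surjectivity of $\phi$, and $f_\alpha \to f$ pointwise on $X$ is equivalent to $\overline{f}_\alpha \to \overline{f}$ pointwise on $\phi(X) = \phi[X]$. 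Since $\phi(X)$ has a countable network, so does $C_p(\phi(X))$ by the lemma of \cite{ArhangelskiiFunction} quoted above, and having a countable network is inherited by subspaces, so $Y$ has a countable network.

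The one step that requires a little care rather than pure formalism is the embedding $Y \hookrightarrow C_p(\phi(X))$; everything else is bookkeeping on $C_p$-duality plus an invocation of Proposition~\ref{prop:okunevproves}. I do not foresee a serious obstacle — the proof is really just the observation that the hypotheses of the two propositions are dual to one another under the canonical map $X \to C_p(C_p(X))$.
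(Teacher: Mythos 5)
Your proposal is correct, and it is essentially the paper's own route: the paper treats Proposition~\ref{prop:okunevasserts} as Okunev's ``reformulation'' of Proposition~\ref{prop:okunevproves} via exactly the reflection-map/dual-map duality you spell out, and the same machinery appears verbatim in the paper's proof of Theorem~\ref{thm:uselemmas} (the map $\refmap{\vphi}{XY}$, the embedding $(\refmap{\vphi}{XY}^*)^{-1}(Y)\subseteq C_p(X_1)$, and the transfer of countable network weight through $C_p$). Your only additions are explicit verifications (quotient map, factoring through fibres) that the paper leaves implicit.
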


Okunev and Reznichenko note that actually $\MA_{\omega_1}$ suffices for these instead of $\MA + \neg\CH$. Okunev and Reznichenko also prove:

\begin{propr}[{\cite[1.8]{Okunev2007}}]\label{prop:slind.cpct.sep.metrizable}
	$\PFA$ implies that every \slind compact separable space is metrizable.
\end{propr}

\begin{propr}[{\cite[1.9]{Okunev2007}}]\label{prop:slind.cpct.monolithic}
	$\PFA$ implies every \slind compact space is $\aleph_0$-monolithic, where a space is \emph{$\aleph_0$-monolithic} if the closure of every countable set has countable network weight. 
\end{propr}

We can use Lemmas \ref{lem:PFAarhan} and \ref{lem:OR17} to prove:

\begin{thm}\label{thm:uselemmas}
	$\PFA$ implies that if $X$ is a separable compact space and $Y \subseteq C_p(X)$ is \lind, then $Y$ has a countable network.
\end{thm}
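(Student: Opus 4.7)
The plan is to invoke $C_p$-duality and swap the roles of $X$ and $Y$ so that we can apply Proposition~\ref{prop:slind.cpct.sep.metrizable} to a compact space built from $X$.

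First, I would consider the evaluation map $e \colon X \to C_p(Y)$ defined by $e(x)(f) = f(x)$ for $f \in Y$. This map is continuous (pointwise continuity in $f$ follows from continuity of each $f \in Y$), and I would let $\tilde X = e(X)$. Then $\tilde X$ inherits three key properties automatically:
\begin{itemize}
\item $\tilde X$ is compact, as the continuous image of the compact space $X$;
\item $\tilde X$ is separable, as the continuous image of the separable space $X$;
\item $\tilde X$ is \slind, since $\tilde X \subseteq C_p(Y)$ and $Y$ is \lind.
\end{itemize}

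Second, with these three properties in hand, Proposition~\ref{prop:slind.cpct.sep.metrizable} (which is what $\PFA$ buys us) tells us that $\tilde X$ is metrizable. A compact metrizable space has a countable base, hence a countable network, and by the Arhangelskii duality lemma I.1.3 quoted above, $C_p(\tilde X)$ also has a countable network.

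Third, I would check that $Y$ embeds into $C_p(\tilde X)$. Since every $f \in Y$ is constant on the fibers of $e$ (by the very definition of the equivalence relation $x_1 \sim x_2 \iff e(x_1) = e(x_2) \iff (\forall g \in Y)\, g(x_1) = g(x_2)$), each $f \in Y$ factors as $f = \tilde f \circ e$ for a unique $\tilde f \colon \tilde X \to \reals$; continuity of $\tilde f$ follows because $e$ is a quotient map (compact surjection onto a Hausdorff space). The assignment $f \mapsto \tilde f$ is visibly a continuous injection of $Y$ into $C_p(\tilde X)$, and the inverse $\tilde f \mapsto \tilde f \circ e$ is also continuous in the pointwise topology, so $Y$ embeds as a subspace of $C_p(\tilde X)$. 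Since having a countable network is hereditary, $Y$ has a countable network.

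There is no real obstacle here once one sees the duality: the entire content is packed into Proposition~\ref{prop:slind.cpct.sep.metrizable}, so the only nontrivial step to verify carefully is that $\tilde X$ really is \slind (which is immediate from the definition) and that the factorization $f \mapsto \tilde f$ is an embedding rather than merely a continuous injection. Nothing needs to be said about $Y$ separating points of $X$, because the quotient $\tilde X$ handles the non-separating case automatically.
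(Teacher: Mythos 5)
Your proposal is correct and follows essentially the same route as the paper: the evaluation map $e$ is the paper's reflection map $\refmap{\vphi}{XY}$, your factorization $f \mapsto \tilde f$ is the inverse of the dual map $\refmap{\vphi}{XY}^*$, and your appeal to Proposition~\ref{prop:slind.cpct.sep.metrizable} is just the packaged form of the paper's combined use of Lemmas~\ref{lem:PFAarhan} and~\ref{lem:OR17}. The only differences are cosmetic: you argue directly (metrizable $\Rightarrow$ countable network $\Rightarrow$ $C_p(\tilde X)$ has one $\Rightarrow$ so does $Y$) where the paper runs the same chain by contradiction.
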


\begin{proof}
	We closely follow part of the argument in \cite{Okunev1995} for Proposition \ref{prop:okunevasserts}. He starts by recalling some material from \cite{ArhangelskiiFunction} (or see \cite{Tkachuk2011}). Given a continuous map $p: X \to Y$, the \emph{dual map} $p^*: C_p(Y) \to C_p(X)$ is defined by $p^*(f) = f\com p$, for all $f \in C_p(Y)$. The dual map is always continuous; it is an embedding if and only if $p$ is onto. If $Y \subseteq C_p(X)$, then the \emph{reflection map} $\refmap{\vphi}{XY}:X \to C_p(Y)$ is defined by $\refmap{\vphi}{XY}(x)(y) = y(x)$, for all $x \in X$ and $y \in Y$. The reflection map is continuous.

	Suppose $X$ is a separable compact space and $Y$ is a \lind subspace of $C_p(X)$ which does not have a countable network. We consider the reflection map $\refmap{\vphi}{XY}: X \to C_p(Y)$ and let $X_1 = \refmap{\vphi}{XY}(X)$. Then $X_1$ is separable and compact. Next, consider the dual map $\refmap{\vphi}{XY}^*: C_p(X_1) \to C_p(X)$. It's an embedding, so $Y_1 = (\refmap{\vphi}{XY}^*)^{-1}(Y)$ is a subspace of $C_p(X_1)$ homeomorphic to $Y$. Since $Y$ does not have a countable network, neither does $Y_1$. Then neither does $C_p(X_1)$, so neither does $X_1$. But by Lemmas \ref{lem:PFAarhan} and \ref{lem:OR17}, $X_1$ is metrizable. This is a contradiction, since compact metrizable spaces have a countable network.
\end{proof}

Let us mention some  more open problems.

\begin{prob}\label{prob:lind1count.groth}
	Are \lind first countable spaces \groth?
\end{prob}

Although we can't fully answer Problem \ref{prob:lind1count.groth}, we can weaken the hypothesis of Theorem \ref{thm:dramatic} in the first countable case:

\begin{thm}\label{thm:weakdramatic}
	$\MA_{\omega_1}$ implies that every \lind first countable space is \groth.
\end{thm}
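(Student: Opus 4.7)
The plan is to apply Lemma \ref{lem:frech} and reduce the conclusion to two sub-assertions: first, that $C_p(X)$ is weakly \groth, and second, that every compact subspace of $C_p(X)$ is \frech. The former is immediate from Theorem \ref{thm:clear}, since first countability entails countable tightness of $X$.

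For the second, let $K \subseteq C_p(X)$ be compact, $L \subseteq K$, and $p \in \clsr{L}$. Mimicking the proof of Theorem \ref{thm:dramatic}, I would first establish countable tightness of $K$, then pick a countable $M \subseteq L$ with $p \in \clsr{M}$. The closure $\clsr{M}$ is then a separable compact subspace of $C_p(X)$; it is \slind because $X$ is \lind, and it is countably tight as a subspace of $K$. Lemma \ref{lem:OR17}, which requires only $\MA_{\omega_1}$, then forces $\clsr{M}$ to be metrizable, and in a compact metrizable space a sequence in $M \subseteq L$ converges to $p$, completing the argument.

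The whole proof thus rests on showing, under $\MA_{\omega_1}$ and first countability of $X$, that every compact $K \subseteq C_p(X)$ is countably tight. This is the $\MA_{\omega_1}$-analogue---in the first countable case---of the PFA result (Lemma \ref{lem:PFAarhan}) used in Theorem \ref{thm:dramatic}, and is where I expect the main technical work to lie. A natural approach is to assume an uncountable free sequence $\st{f_\alpha}{\alpha<\omega_1}$ in $K$ and to use countable local bases at points of $X$ to encode the separating open sets of the free sequence into a ccc combinatorial object of cardinality $\omega_1$ which $\MA_{\omega_1}$ can destroy. The main obstacle is precisely this step: arranging the construction so that first countability of $X$ provides just enough structure to replace the properness strength used in Lemma \ref{lem:PFAarhan} with the ccc strength of $\MA_{\omega_1}$.
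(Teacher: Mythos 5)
There is a genuine gap, and it sits exactly where you locate it. Your reduction via Lemma \ref{lem:frech} is sound as far as it goes: weak \groth{}ness of $X$ does follow from Theorem \ref{thm:clear}, and \emph{if} compact subspaces of $C_p(X)$ were known to be countably tight, then Lemma \ref{lem:OR17} would make closures of countable sets metrizable and yield \frech{}ness just as in Theorem \ref{thm:dramatic}. But the countable tightness of compact $K \subseteq C_p(X)$ is the entire content of the theorem in this approach, and you do not prove it; you only sketch a hope that an uncountable free sequence in $K$ can be encoded into a ccc poset of size $\omega_1$. Nothing in the proposal indicates why first countability of $X$ should produce such a poset: $C_p(X)$ itself is first countable only when $X$ is countable, so the local bases of $X$ do not transfer to $K$ in any direct way, and the result you are trying to imitate (Lemma \ref{lem:PFAarhan}) is a genuinely proper-forcing theorem of \arhan with no known ccc analogue at this level of generality. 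As written, the ``main obstacle'' paragraph is an acknowledgement that the key step is missing, not an argument for it.

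The paper takes a different route that bypasses the tightness of $C_p(X)$ altogether: it proves that $X$ itself is hereditarily separable and then quotes \arhan's $\ZFC$ theorem that hereditarily separable spaces are \groth. Concretely, $X$ is identified (via the reflection/dual-map machinery) with a \lind subspace of $C_p(K_1)$ for a compact separable $K_1$; an $\MA_{\omega_1}$ theorem of Okunev and Reznichenko then makes $X$ hereditarily \lind, and Szentmikl\'ossy's $\MA_{\omega_1}$ theorem (first countable hereditarily \lind implies hereditarily separable) finishes. First countability is consumed entirely by Szentmikl\'ossy's theorem rather than by any forcing you would have to design yourself. Note the irony: the metrizability --- hence countable tightness --- of compact subspaces of $C_p(X)$, which your proposal needs as an \emph{input}, appears in the paper only as a \emph{corollary} of the hereditary separability of $X$. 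If you want to salvage your outline, the honest fix is to prove hereditary separability of $X$ first and derive the tightness statement from it, i.e., to reproduce the paper's argument.
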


Before proving this, we need to mention some more general facts about $C_p$, taken from \cite{Okunev1995}.

\begin{lem}
	Let $Y \subseteq C_p(X)$. Let $x_1, x_2 \in X$. Let $x_1\sim_Y x_2$ if $y(x_1) = y(x_2)$ for all $y \in Y$. Let $X_1$ be the set of equivalence classes and $\pi: X \to X_1$ the natural map. For any $y \in Y$, there is a $y': X_1 \to \reals$ such that $y = y'\com \pi$. Give $X_1$ the weakest topology that makes all of the $y'$'s continuous. With this topology, $X_1$ is homeomorphic to $\refmap{\vphi}{XY}(X)$. Then $(\refmap{\vphi}{XY}^*)^{-1}(Y)$ is a subspace of $C_p(X_1)$ homeomorphic to $Y$. 
\end{lem}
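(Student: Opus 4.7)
The plan is to verify three successive claims: the set-theoretic factorization $y = y' \com \pi$, the identification of $X_1$ with $\refmap{\vphi}{XY}(X)$ as topological spaces, and finally that $(\refmap{\vphi}{XY}^*)^{-1}(Y)$ is homeomorphic to $Y$.

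First, the factorization is immediate from the definition of $\sim_Y$: each $y \in Y$ is constant on $\sim_Y$-equivalence classes, so it factors uniquely through $\pi$ to give a function $y' : X_1 \to \reals$. I would then define $\bar\vphi : X_1 \to \refmap{\vphi}{XY}(X)$ by $\bar\vphi([x]) = \refmap{\vphi}{XY}(x)$ and note that $\bar\vphi$ is a bijection, since by the definition of the reflection map, $\refmap{\vphi}{XY}(x_1) = \refmap{\vphi}{XY}(x_2)$ iff $y(x_1) = y(x_2)$ for every $y \in Y$, iff $x_1 \sim_Y x_2$.

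Next I would check that $\bar\vphi$ is a homeomorphism. Because $C_p(Y)$ carries the pointwise topology, the subspace $\refmap{\vphi}{XY}(X) \subseteq C_p(Y)$ has a subbasis consisting of sets of the form $\{\refmap{\vphi}{XY}(x) : y(x) \in U\}$ for $y \in Y$ and $U \subseteq \reals$ open. Pulling these back through $\bar\vphi^{-1}$ gives exactly $(y')^{-1}(U)$, a subbasis for the weakest topology on $X_1$ making all the $y'$'s continuous. So the two topologies coincide, and under this identification $\pi$ corresponds to $\refmap{\vphi}{XY}$ viewed as a surjection onto $X_1$.

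Finally, since $\refmap{\vphi}{XY} : X \to X_1$ is continuous and surjective, the dual map $\refmap{\vphi}{XY}^* : C_p(X_1) \to C_p(X)$ is an embedding by the general fact recalled in the previous proof. Each $y \in Y$ equals $y' \com \pi = \refmap{\vphi}{XY}^*(y')$, with $y'$ continuous by the very choice of topology on $X_1$; hence $Y$ lies in the image of $\refmap{\vphi}{XY}^*$, and restricting this embedding to $(\refmap{\vphi}{XY}^*)^{-1}(Y)$ yields the desired homeomorphism onto $Y$. The whole argument is bookkeeping; the only observation that requires a moment of thought is the essentially tautological fact that the quotient construction $(X_1,\pi)$ and the reflection construction $(\refmap{\vphi}{XY}(X), \refmap{\vphi}{XY})$ are the same topological factorization of the family $Y \subseteq C_p(X)$, described in two different ways.
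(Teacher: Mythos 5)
Your proof is correct. The paper states this lemma without proof (it is quoted as background material from Okunev's paper), and your argument --- factoring each $y$ through $\pi$, matching the subbasic sets of $\refmap{\vphi}{XY}(X)\subseteq C_p(Y)$ with the sets $(y')^{-1}(U)$ that generate the weak topology on $X_1$, and then restricting the embedding $\refmap{\vphi}{XY}^*$ to the preimage of $Y$ --- is exactly the standard verification that the source has in mind.
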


In particular, this tells us that if $K$ is a separable compact subspace of $C_p(Y)$, then $Y$ is homeomorphic to a subspace of $C_p(K_1)$, where $K_1$ is a continuous image of $K$ and hence is separable and compact.

\begin{proof}[Proof of Theorem \ref{thm:weakdramatic}.]
	Let $Y$ be \lind and first countable. Let $K$ be a compact separable subspace of $Y$. Let $K_1$ be a continuous image of $K$, and $Y_1$ be a homeomorphic copy of $Y$ included in $C_p(K_1)$. We now invoke two applications of $\MA_{\omega_1}$:

	\begin{lemr}[\cite{Okunev2007}]
		$\MA_{\omega_1}$ implies that if $K$ is a compact separable space, then every \lind subspace of $C_p(K)$ is hereditarily \lind.
	\end{lemr}

	\begin{lemr}[\cite{Szentmiklossy1980}]
		$\MA_{\omega_1}$ implies that every first countable hereditarily \lind space is hereditarily separable.
	\end{lemr}

	But,

	\begin{lemr}[{\cite[5.26]{Arhangelskii1998}}]
		Every hereditarily separable space is \groth.
	\end{lemr}\vspace{-1.2cm}
\end{proof}

A corollary of what we just proved is of interest. 

\begin{crl}
	$\MA_{\omega_1}$ implies that if $K$ is a compact subspace of $C_p(Y)$, where $Y$ is \lind and first countable, then $K$ is metrizable.
\end{crl}

\begin{proof}
	In the previous proof, we showed $Y$ was hereditarily separable. \arhan proved:

	\begin{lemr}[{\cite[3.13]{Arhangelskii1997a}}]
		If $Y$ is separable and $K$ is a compact subspace of $C_p(Y)$, then $K$ is metrizable.
	\end{lemr}\vspace{-1.2cm}
\end{proof}

In the spirit of Problem \ref{prob:lind1count.groth}, one can ask:

\begin{prob}
	If $X$ is a separable compact space and $Y$ is a \lind first countable subspace of $C_p(X)$, does $Y$ have a countable network?
\end{prob}

We have a partial answer:

\begin{thm}
	$\MA_{\omega_1}$ implies that if $X$ is a separable compact space and $Y$ is a \lind first countable subspace of $C_p(X)$, then $Y$ has a countable network.
\end{thm}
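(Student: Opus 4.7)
The plan is to combine the reflection/dual-map machinery used in the proof of Theorem \ref{thm:uselemmas} with the corollary established just before (the one that says $\MA_{\omega_1}$ implies every compact subspace of $C_p(Y)$, $Y$ \lind first countable, is metrizable). The crucial observation is that under our hypotheses, $Y$ is itself \lind and first countable, so the corollary applies to compact subspaces of $C_p(Y)$, and in particular to the image of $X$ in $C_p(Y)$ under the reflection map.

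First I would form the reflection map $\refmap{\vphi}{XY}: X \to C_p(Y)$ and set $X_1 = \refmap{\vphi}{XY}(X) \subseteq C_p(Y)$. Since $X$ is separable and compact, so is $X_1$. Second, I would use the dual map $\refmap{\vphi}{XY}^*: C_p(X_1) \to C_p(X)$, which is an embedding because $\refmap{\vphi}{XY}: X \to X_1$ is onto, and set $Y_1 = (\refmap{\vphi}{XY}^*)^{-1}(Y)$; this $Y_1$ is a subspace of $C_p(X_1)$ homeomorphic to $Y$.

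Third, and this is the key step, I would apply the immediately preceding corollary: $Y$ is \lind and first countable, and $X_1$ is a compact subspace of $C_p(Y)$, so under $\MA_{\omega_1}$ the space $X_1$ is metrizable. Being compact and metrizable, $X_1$ has a countable network, whence $C_p(X_1)$ has a countable network by the Arhangel'ski\u{\i} duality lemma cited above. Finally, $Y_1 \subseteq C_p(X_1)$ inherits a countable network, and transferring along the homeomorphism $Y \cong Y_1$ gives $Y$ a countable network.

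There is not really a serious obstacle here; the argument is essentially the same ``reflect--dualize'' trick as in Theorem \ref{thm:uselemmas}, but with the role of Lemmas \ref{lem:PFAarhan} and \ref{lem:OR17} (which required $\PFA$) now played by the weaker-hypothesis corollary about \lind first countable spaces (which required only $\MA_{\omega_1}$). The only point that warrants care is checking that $\refmap{\vphi}{XY}^*$ really embeds, so that $Y_1$ is genuinely homeomorphic to $Y$ rather than merely a continuous image, but this is automatic from surjectivity of $\refmap{\vphi}{XY}$ onto $X_1$.
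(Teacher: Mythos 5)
Your proposal is correct and is essentially the argument the paper intends: the paper's proof is just the remark that the statement ``follows from what we have just done by the same argument as for Theorem \ref{thm:uselemmas}'', i.e.\ the reflect--dualize construction with the metrizability of $X_1$ now supplied by the $\MA_{\omega_1}$ corollary on compact subspaces of $C_p(Y)$ for $Y$ \lind and first countable. The only (cosmetic) difference is that you run the argument directly via the countable-network duality lemma rather than by contradiction as in Theorem \ref{thm:uselemmas}.
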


This follows from what we have just done by the same argument as for Theorem \ref{thm:uselemmas}.

Note that:

\begin{thm}
\label{thm:hereditarygspace}
	If $Y$ is a hereditary $g$-space, then countably compact subspaces of $Y$ are compact.
\end{thm}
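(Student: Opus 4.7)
The plan is to unwind the definitions: the result follows almost immediately from the fact that a space is always ``countably compact in itself'' whenever it is intrinsically countably compact.

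Concretely, I would let $Z \subseteq Y$ be countably compact, and use the hereditary hypothesis to conclude that $Z$, viewed as a subspace of $Y$, is itself a $g$-space. The next step is to observe that $Z$ is a countably compact subset \emph{of $Z$}: every infinite subset $A \subseteq Z$ has, by countable compactness of $Z$, a limit point in $Z$, which is exactly what the definition of ``countably compact in $Z$'' requires. Applying the $g$-space property of $Z$ to the subset $Z \subseteq Z$ yields that $\clsr{Z}^{Z} = Z$ is compact.

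The only thing to be careful about is the ambient space in each use of ``countably compact in.'' The definition from \cite{Arhangelskii1998} quoted earlier refers limit points to the ambient space, so passing from $Y$ to $Z$ as ambient is the key move, and that is precisely what the ``hereditary'' adjective licenses. There is no real obstacle here; the argument is a one-line definitional chase, and I expect the written proof to be just a couple of sentences.
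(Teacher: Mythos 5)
Your proposal is correct and is essentially the paper's own proof: the paper argues in one line that $Z$ is countably compact in itself, hence (being a $g$-space by heredity) its closure in itself, namely $Z$, is compact. Your version just spells out the same definitional chase in more detail.
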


\begin{proof}
	Let $Z \subseteq Y$ be countably compact. Then it is countably compact in itself and its closure in itself is compact.
\end{proof}

\begin{prob}
	If countably compact subspaces of $C_p(X)$ are compact, is $X$ \groth?
\end{prob}

There is a necessary and sufficient condition on $X$ so that $C_p(X)$ is countably tight (see Lemma \ref{lem:xnctight} below), and there is even a necessary and sufficient condition on $X$ that ensures $C_p(X)$ is \frech \cite{Gerlits1982}, but these conditions are too onerous and entail more than we need.

\begin{prob}
	Find a necessary and sufficient condition on $X$ such that compact subspaces of $C_p(X)$ are countably tight.
\end{prob}

\begin{defi}
	A sequence $\seq{x}{\alpha}{\alpha<\kappa}$ is \emph{free} if for all $\beta < \kappa$, \[\clsr{\seq{x}{\alpha}{\alpha<\beta}} \insect \clsr{\seq{x}{\alpha}{\alpha\geq\beta}} = \emptyset.\]
\end{defi}

It is well-known that:

\begin{lem}
	If $X$ is \lind and countably tight, then $X$ does not include an uncountable free sequence.
\end{lem}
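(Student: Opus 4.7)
The plan is to assume for contradiction that $\langle x_\alpha : \alpha < \omega_1 \rangle$ is a free sequence in $X$, let $L = \{x_\alpha : \alpha < \omega_1\}$, and use countable tightness to manufacture an open cover of $X$ that witnesses a contradiction when a countable subcover is extracted via \lind-ness.

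For each point $x \in \overline{L}$, countable tightness gives a countable $I(x) \subseteq \omega_1$ with $x \in \overline{\{x_\alpha : \alpha \in I(x)\}}$; since $I(x)$ is countable, $\beta(x) := \sup I(x) + 1$ is a countable ordinal, and $x \in \overline{\{x_\alpha : \alpha < \beta(x)\}}$. The defining property of a free sequence then tells us $x \notin \overline{\{x_\alpha : \alpha \geq \beta(x)\}}$, so there is an open neighborhood $U_x \ni x$ with $U_x \cap \{x_\alpha : \alpha \geq \beta(x)\} = \emptyset$. For each $x \in X \setminus \overline{L}$, simply choose an open neighborhood $V_x \ni x$ disjoint from $L$. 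Together, $\{U_x : x \in \overline{L}\} \cup \{V_x : x \notin \overline{L}\}$ is an open cover of $X$.

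By \lind-ness, extract a countable subcover, indexed say as $\{U_{y_n}\}_{n<\omega} \cup \{V_{z_n}\}_{n<\omega}$. Set $\beta^* = \sup_n \beta(y_n) < \omega_1$, and consider the point $x_{\beta^*}$. Since $x_{\beta^*} \in L$, it lies in no $V_{z_n}$, so it must lie in some $U_{y_n}$; but $\beta^* \geq \beta(y_n)$ forces $x_{\beta^*} \in \{x_\alpha : \alpha \geq \beta(y_n)\}$, which is disjoint from $U_{y_n}$ by construction. This contradiction finishes the proof.

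The only delicate point is ensuring that $\beta(x)$ is genuinely countable, which is immediate from countable tightness (the indexing set $I(x)$ is countable, hence bounded below $\omega_1$). Everything else is routine bookkeeping; the interplay of \lind-ness (to cut the cover to size $\omega$) and countable tightness (to bound each relevant ordinal below $\omega_1$) combines to trap the point $x_{\beta^*}$.
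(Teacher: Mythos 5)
Your proof is correct and is essentially the paper's argument in dual (open-cover) form: your cover by the sets $U_x$, each missing a tail $\{x_\alpha : \alpha \geq \beta(x)\}$, is exactly the complement of the decreasing family of closed sets $\overline{\{x_\alpha : \alpha \geq \beta\}}$ that the paper intersects. The paper applies \lind ness first, to get a point in $\bigcap_{\beta<\omega_1}\overline{\{x_\alpha : \alpha \geq \beta\}}$ that then violates countable tightness, whereas you apply countable tightness pointwise first and then \lind ness; both routes trap a point against the freeness condition in the same way.
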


\begin{proof}
	Suppose $F = \seq{x}{\alpha}{\alpha < \omega_1}$ is free. Let $F_\beta = \clsr{\seq{x}{\alpha}{\alpha<\beta}}$. Then $\clsr{\st{F_\beta}{\beta<\omega_1}}$ is a decreasing family of closed subspaces of $X$. Since $X$ is \lind, there is an $x \in \Insect\clsr{\st{F_\beta}{\beta<\omega_1}}$. Then $x \in \clsr{F}$ but $x \notin \clsr{A}$ for any countable $A \subseteq F$, contradicting countable tightness.
\end{proof}

Todorcevic proved:

\begin{thmr}[\cite{TodorcevicSL}]
	$\PFA$ implies: if $X$ includes no uncountable free sequences, then every countably compact subspace of $C_p(X)$ is compact.
\end{thmr}

The hypothesis is weaker than that of Theorem \ref{thm:dramatic}, but so is the conclusion.

\begin{prob}
	Does $\PFA$ imply that if $X$ includes no uncountable free sequences, then $X$ is (weakly) \groth?
\end{prob}

Todorcevic also proved:

\begin{lemr}[\cite{TodorcevicSL}]
	Suppose every countably compact subspace of $C_p(X)$ is compact. Then every compact subspace of $C_p(X)$ is countably tight.
\end{lemr}

\begin{proof}
	Suppose there is a $Z \subseteq C_p(X)$ such that there is a $y \in \clsr{Z} - \Union\st{\clsr{Z_0}}{Z_0 \subseteq Z \mbox{ is countable}}$. But $\Union\st{\clsr{Z_0}}{Z_0 \subseteq Z \mbox{ is countable}}$ is countably compact, hence compact, hence closed, a contradiction.
\end{proof}

\begin{crlr}[\cite{TodorcevicSL}]
	$\PFA$ implies that if $X$ does not include any uncountable free sequences, then compact subspaces of $C_p(X)$ are countably tight.
\end{crlr}

\section{Counterexamples}

In \cite{Arhangelskii1998}, \arhan asked whether the conclusion of Proposition \ref{thm:MAnoCH} is true in $\ZFC$. It is not:

\begin{eg}
\label{egDiamond}
	Assuming $\diamondsuit$ plus Kurepa's Hypothesis, Ivanov \cite{Ivanov1978} constructs a compact space $Y$ of cardinality $2^\mfrak{c}$ such that $Y^n$ is hereditarily separable for all $n < \omega$. $C_p(Y)$ is the required counterexample.
\end{eg}

To see this, we require several results from the literature.

\begin{lemr}[\cite{ArhangelskiiFunction}]
\label{lem:xnctight}
	$X^n$ is \lind for every $n < \omega$ if and only if $C_p(X)$ is countably tight.
\end{lemr}

\begin{lemr}[\cite{ArhangelskiiFunction}]
	$X$ embeds into $C_p(C_p(X))$.
\end{lemr}

Clearly, separable Fr\'echet-Urysohn spaces have cardinality $\leq \mfrak{c}$. Ivanov's space $Y$ is too big to be Fr\'echet-Urysohn, yet it embeds in $C_p(C_p(Y))$, so $C_p(Y)$ cannot be \groth, although it is weakly \groth. $(C_p(Y))^n$ is, however, (hereditarily) \lind for all $n < \omega$ by the Velichko-Zenor theorem:

\begin{lemr}[\cite{Velichko1981}, \cite{Zenor1980}]
	If $X^n$ is hereditarily separable for all $n < \omega$, then $(C_p(X))^n$ is hereditarily \lind for all $n < \omega$.
\end{lemr}

Ivanov's space also provides counterexamples for various other propositions proved by \arhan, Okunev, and Reznichenko under $\MA_{\omega_1}$ or $\PFA$. $Y$ is \slind, compact, countably tight, separable, but not metrizable. This violates the conclusion of Lemma \ref{lem:OR17}.

\begin{defi}
	An \emph{$S$-space} is a hereditarily separable space that is not hereditarily \lind. A \emph{strong $S$-space} is an $S$-space with all finite powers hereditarily separable.
\end{defi}

\begin{lemr}[\cite{Todorcevic1989}]
	$\mfrak{b} = \aleph_1$ implies there is a compact strong $S$-space.
\end{lemr}

$\mfrak{b} = \aleph_1$ is weaker than $\CH$, which is weaker than $\diamond$. Todorcevic's space will work for the purpose of violating the conclusion of Lemma \ref{lem:OR17} as well as $Y$. (To be more precise, Todorcevic constructs a locally compact, locally countable strong $S$-space $T$, but then its one-point compactification $T^* = T \union \{*\}$ is a compact strong $S$-space).

Both $Y$ and $T^*$, embedded in $C_p(C_p(Y))$ and $C_p(C_p(T^*))$ respectively, provide counterexamples to the conclusion of Proposition \ref{prop:slind.cpct.sep.metrizable}.

They also provide counterexamples to the conclusion of Proposition \ref{prop:slind.cpct.monolithic}. The point is that compact spaces with countable network weight are metrizable.

Any compact strong $S$-space $Y$ refutes the conclusion of Proposition \ref{prop:okunevasserts}. $(C_p(Y))^n$ will be hereditarily \lind, but $C_p(Y)$ does not have a countable network, else $Y$ would, but then $Y$ would be hereditarily \lind.

The hypothesis for Example \ref{egDiamond} seems too strong; $\diamondsuit$ ought to suffice. I conjecture that $\diamondsuit$ implies there is a ``strong Ostaszewski space'', i.e.~a strong $S$-space $X$ which is countably compact, perfectly normal, but not compact. $C_p(X)$ would then have finite products hereditarily \lind, but would not be \groth since $X$ would be embedded in $C_p(C_p(X))$, violating Theorem \ref{thm:hereditarygspace}. Notice we are not using perfect normality, so even $\CH$ might suffice.

\begin{rmk*}
	\cite{Okunev2007} appears to have some misprints. I believe that their reference 2 should actually be our \cite{ArhangelskiiFunction}. Their Question 0.5 is asserted to be essentially the same as Problem IV.1.8 in \cite{ArhangelskiiFunction} but the latter problem apparently has nothing to do with the former, so this may be a misprint. They call ``centered'' what is normally called ``linked'' \cite{Kunen1979}. On the other hand, their use of ``\slind'' for the concept \cite{ArhangelskiiFunction} calls ``suplindel\"of'' is correct. Professor \arhan has informed me that ``suplindel\"of'' was a mistranslation by the translator of the Russian original.
\end{rmk*}

\newpage
\bibliographystyle{alpha}
\bibliography{refdb2020.bib}

\end{document}